      \numberwithin{equation}{section}
\DeclareSymbolFont{usualmathcal}{OMS}{cmsy}{m}{n}
\DeclareSymbolFontAlphabet{\mathcal}{usualmathcal}
\DeclareMathAlphabet\BCal{OMS}{cmsy}{b}{n}
\definecolor{cornellred}{rgb}{0.7, 0.11, 0.11}
\definecolor{britishracinggreen}{rgb}{0.0, 0.26, 0.15}
\definecolor{cobalt}{rgb}{0.0, 0.28, 0.67}
\newcommand{\Eu}{\mathsf{Eu}}
\newcommand{\Bl}{\mathsf{Bl}}
\newcommand{\Con}{\mathbf{F}}
\newcommand{\Gr}{\mathrm{Gr}}
\newcommand{\sm}{\mathrm{sm}}
\newcommand{\crit}{\mathrm{crit}}
\newcommand{\dd}{\mathrm{d}}
\newcommand{\mult}{\mathrm{mult}}
\newcommand{\simto}{\,\widetilde{\to}\,}
\newcommand{\into}{\hookrightarrow}
\newcommand{\onto}{\twoheadrightarrow}
\newcommand{\HH}{\mathrm{H}}
\newcommand{\OO}{\mathscr O}
\DeclareMathOperator{\Hilb}{Hilb}
\DeclareMathOperator{\ncQuot}{ncQuot}
\DeclareMathOperator{\Quot}{Quot}
\DeclareMathOperator{\Spec}{Spec}
\DeclareMathOperator{\id}{id}
\DeclareMathOperator{\red}{red}
\newcommand{\BA}{{\mathbb{A}}}
\newcommand{\BC}{{\mathbb{C}}}
\newcommand{\BP}{{\mathbb{P}}}
\newcommand{\BZ}{{\mathbb{Z}}}
\newcommand{\CT}{{\mathcal{T}}}
\newcommand{\CU}{{\mathcal{U}}}
\tikzset{commutative diagrams/arrow style=math font}
\tikzset{commutative diagrams/.cd,
mysymbol/.style={start anchor=center,end anchor=center,draw=none}}
\tikzset{
shift up/.style={
to path={([yshift=#1]\tikztostart.east) -- ([yshift=#1]\tikztotarget.west) \tikztonodes}
}
}
\theoremstyle{definition}
\newtheorem*{lemma*}{Lemma}
\newtheorem*{theorem*}{Theorem}
\newtheorem*{example*}{Example}
\newtheorem*{fact*}{Fact}
\newtheorem*{notation*}{Notation}
\newtheorem*{definition*}{Definition}
\newtheorem*{prop*}{Proposition}
\newtheorem*{remark*}{Remark}
\newtheorem*{corollary*}{Corollary}
\newtheorem*{conventions*}{Conventions}
\newtheorem{definition}{Definition}[section]
\newtheorem{example}[definition]{Example}
\newtheorem{remark}[definition]{Remark}
\newtheoremstyle{thm} 
        {3mm}
        {3mm}
        {\slshape}
        {0mm}
        {\bfseries}
        {.}
        {1mm}
        {}
\theoremstyle{thm}
\newtheorem{theorem}[definition]{Theorem}
\newtheorem{corollary}[definition]{Corollary}
\newtheorem{conj}{Conjecture}
\newtheoremstyle{ex} 
        {3mm}
        {3mm}
        {}
        {0mm}
        {\scshape}
        {.}
        {1mm}
        {}
\theoremstyle{ex}
\newtheoremstyle{sol} 
        {3mm}
        {3mm}
        {}
        {0mm}
        {\scshape}
        {.}
        {1mm}
        {}
\theoremstyle{sol}
\newtheorem*{Acknowledgments*}{Acknowledgments}
\title[A sign that used to annoy me, and still does]{A sign that used to annoy me, and still does}
\author{Andrea T. Ricolfi}
\begin{document}
\maketitle
\thispagestyle{empty}
\begin{abstract}
We provide a proof of the following fact: if a complex scheme $Y$ has  Behrend function constantly equal to a sign $\sigma \in \set{\pm 1}$, then all of its components $Z \subset Y$ are generically reduced and satisfy $(-1)^{\dim_{\BC} T_pY} = \sigma = (-1)^{\dim Z}$ for $p \in Z$ a general point. Given the recent counterexamples to the parity conjecture for the Hilbert scheme of points $\Hilb^n(\BA^3)$, our argument suggests a possible path to disprove the constancy of the Behrend function of $\Hilb^n(\BA^3)$. 
\end{abstract}

\section{Introduction}
By work of Behrend \cite{Beh}, every scheme $Y$ of finite type over $\BC$ carries a canonical constructible function $\nu_Y \colon Y(\BC) \to \BZ$, known as the \emph{Behrend function} of $Y$. It is a subtle invariant of singularities, with a key role in enumerative geometry. Already in the case of schemes with just one point, its computation is a nontrivial task \cite{Graffeo-Ricolfi}. The $\nu_Y$-weighted Euler characteristic of $Y$ is the global invariant
\[
\chi(Y,\nu_Y) = \sum_{m \in \BZ}m \chi(\nu_Y^{-1}(m))\,\in\,\BZ,
\]
where $\chi$ is the topological Euler characteristic. 

Fix $n \in \BZ_{\geq 0}$. Let $\HH_n = \Hilb^n(\BA^3)$ be the Hilbert scheme of $n$ points on affine 3-space, namely the moduli space of ideals $I \subset \BC[x,y,z]$ of colength $n$.
Behrend--Fantechi proved that 
\begin{equation}\label{eqn:signs}
(-1)^n = (-1)^{\dim_{\BC} T_I \HH_n} = \nu_{\HH_n}(I),
\end{equation}
as soon as $I$ is \emph{monomial} \cite{BFHilb}. Moreover, the main result of loc.~cit.~uses the above identities to compute
\begin{equation}\label{eqn:weighted-chi-Hilb}
\chi(\HH_n,\nu_{\HH_n}) = (-1)^n \chi(\HH_n).
\end{equation}
In other words, the $\nu_{\HH_n}$-weighted Euler characteristic of $\HH_n$, also known as the $n$-th degree $0$ \emph{Donaldson--Thomas invariant} of $\BA^3$, is the same that one would have if $\nu_{\HH_n}$ were constant.
One is then led to make the following prediction.

\begin{conj}\label{conj:behrend}
    The Behrend function of $\Hilb^n(\BA^3)$ is constantly equal to $(-1)^n$.
\end{conj}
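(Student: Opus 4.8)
The plan is to leverage the local presentation of $\HH_n$ as a critical locus together with the torus action, so as to reduce the pointwise value of $\nu_{\HH_n}$ to the monomial ideals, where \eqref{eqn:signs} already furnishes the answer. First I would use that, Zariski-locally around any ideal $I$, the scheme $\HH_n$ is isomorphic to the critical locus $\crit(W)$ of the natural superpotential $W\colon M \to \BC$ on a smooth affine variety $M$ (the space of framed matrices for $\BA^3$, with $W=\mathrm{Tr}(C[A,B])$, whose critical equations are the commuting conditions cutting out $\HH_n$). Behrend's formula for critical loci then reads $\nu_{\HH_n}(I) = (-1)^{\dim M}\bigl(1 - \chi(F_{W,I})\bigr)$, with $F_{W,I}$ the Milnor fibre of $W$ at the point representing $I$, and the task becomes to show that this number equals $(-1)^n$ for \emph{every} $I$, granting that it does for monomial $I$.

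Next I would invoke the scaling action of $T=(\BC^*)^3$ on $\BA^3$, for which $\HH_n$ is semiprojective: fixing a generic cocharacter $\lambda$ with positive weights, every $I$ admits a limit $I_0 = \lim_{t\to 0}\lambda(t)\cdot I$ that is $T$-fixed, hence monomial, and the Bialynicki--Birula cells flow to the isolated monomial fixed points. Because $\nu_{\HH_n}$ is intrinsic it is $T$-invariant, hence constant along each orbit; the crux is to upgrade this orbitwise constancy to constancy along the contracting flow, i.e. to establish \[ \nu_{\HH_n}(I) = \nu_{\HH_n}(I_0) = (-1)^n. \] The engine I would try is the $\lambda$-equivariance of $W$, which is homogeneous of positive weight: the vanishing cycle of a contracting equivariant superpotential is monodromic, and I would attempt to show, via the motivic Milnor fibre and its behaviour under the flow, that the reduced Euler characteristic of $F_{W,I}$ is locally constant as $I$ degenerates to $I_0$.

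The step I expect to be the real obstacle is precisely this flow-invariance of the weighted Milnor fibre. The Behrend function is constructible but neither upper nor lower semicontinuous, and is not constant on $\BC^*$-orbit closures in general, so no formal argument is available: one must genuinely control $F_{W,I}$ as the critical structure worsens in the limit. On the smooth locus of the distinct-points component this is immediate, since $\HH_n$ is smooth of dimension $3n$ and $\nu = (-1)^{3n} = (-1)^n$, and it remains tractable wherever $W$ has, up to a trivial quadratic factor, an isolated singularity. The difficulty concentrates on the higher-codimension and non-reduced strata that appear for $n$ large, where the critical locus is far from isolated; there the monodromic/equivariant comparison of Milnor fibres is delicate, and making it uniform across all such strata — so that the value $(-1)^n$ is forced at every point, and not merely after taking the $\nu$-weighted Euler characteristic as in \eqref{eqn:weighted-chi-Hilb} — is the essential content that would complete the proof.
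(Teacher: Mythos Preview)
The statement you are attempting to prove is not a theorem of the paper but \Cref{conj:behrend}, an \emph{open conjecture}. The paper offers no proof; on the contrary, its declared purpose is to set out a possible route to \emph{disprove} it, via \Cref{thm:main-theorem} together with the recent failure of the parity conjecture. The paper also explicitly recalls that a proof was proposed by Morrison \cite{Morrison-Behrend} and that the gap in that argument has not been repaired.

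Your outline is, in substance, the Morrison strategy: use the critical-locus presentation of $\HH_n$, the $(\BC^\times)^3$-action, and the Bia\l{}ynicki--Birula flow to transport the value of $\nu_{\HH_n}$ from an arbitrary ideal $I$ to its monomial limit $I_0$, where \eqref{eqn:signs} applies. You have correctly located the obstruction yourself: the Behrend function is $T$-invariant but is \emph{not} constant on orbit closures in general, so the desired identity $\nu_{\HH_n}(I)=\nu_{\HH_n}(I_0)$ does not follow from equivariance, and the proposed control of the Milnor fibre under the degeneration $I \rightsquigarrow I_0$ is precisely the step that is not known. This is not a technicality to be filled in later --- it is the entire content of the conjecture, and your final paragraph effectively concedes as much. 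The appeal to monodromic vanishing cycles and motivic Milnor fibres, as currently understood, does not deliver pointwise constancy of $\nu_{\HH_n}$; it delivers only the integrated identity \eqref{eqn:weighted-chi-Hilb}, which is already the Behrend--Fantechi theorem. Thus what you have written is a sketch of a plausible programme rather than a proof, and the paper's own stance is that the conjecture may in fact be false.
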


A proof of this conjecture was proposed by A.~Morrison in \cite{Morrison-Behrend}, but a gap in the proof, to the best of our knowledge, has not been fixed since.

On the other hand, the first identity in Formula \eqref{eqn:signs} encourages the following conjecture, due to Okounkov--Pandharipande \cite{parity-conj-OP}.

\begin{conj}[Parity Conjecture]\label{conj:parity}
    One has $(-1)^n = (-1)^{\dim_{\BC} T_I \HH_n}$ for all $I \in \HH_n$.
\end{conj}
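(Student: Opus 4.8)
The plan is to prove the identity first at the torus-fixed points, where it is already available, and then to try to transport it to an arbitrary ideal by a Gröbner degeneration. Let $T = (\BC^*)^3$ be the torus acting on $\BA^3$, hence on $\HH_n$, whose fixed points are exactly the monomial ideals. For such an ideal $I_0$ the identity is the content of \eqref{eqn:signs}, namely $(-1)^{\dim_{\BC} T_{I_0}\HH_n} = (-1)^n$. Structurally this reflects that $\HH_n$ carries a $T$-equivariant symmetric obstruction theory, coming from the Calabi--Yau structure of $\BA^3$, under which the tangent character at a fixed point is self-dual up to the equivariant canonical weight $\kappa = t_1t_2t_3$; the nonzero $T$-weights then cancel in pairs and the parity is carried by the residual zero-weight part. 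The first step is to package this base case in a form ready for specialization.

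Next I would reduce an arbitrary $I \in \HH_n$ to the fixed locus. For a generic one-parameter subgroup $\lambda\colon \BC^* \to T$, the flat limit $I_0 = \lim_{t\to 0}\lambda(t)\cdot I$ is the initial (monomial) ideal of $I$, so $I_0$ lies in the closure of the $\lambda$-orbit of $I$ and is a $T$-fixed point. Since the function $J \mapsto \dim_{\BC} T_J\HH_n$ is upper semicontinuous on $\HH_n$, we obtain $\dim_{\BC} T_I\HH_n \le \dim_{\BC} T_{I_0}\HH_n$, and combining this with the base case the desired identity for $I$ would follow as soon as the defect
\[
\delta(I) \;=\; \dim_{\BC} T_{I_0}\HH_n - \dim_{\BC} T_I\HH_n
\]
is even.

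The hard part, and in fact the fatal one, is controlling the parity of $\delta(I)$. The symmetric pairing organizes the tangent weights only at the fixed point $I_0$, where weight spaces are defined; it degenerates along the orbit, and nothing in the equivariant geometry forces the jump in embedding dimension between $I$ and its monomial degeneration to be even. I therefore expect this to be where the argument genuinely breaks, and indeed it must: the recently discovered counterexamples to \Cref{conj:parity} produce ideals $I$ for which $\delta(I)$ is odd, so no specialization argument can close the gap and the statement as worded is false. What the plan does achieve is to localize the failure precisely to the parity of $\delta(I)$ over the non-monomial locus; a secondary obstacle, of independent interest, is the absence of any explicit local model of $\HH_n$ near a non-monomial ideal that would compute $\dim_{\BC} T_I\HH_n$ directly and sidestep the degeneration entirely.
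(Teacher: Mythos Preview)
The statement you were handed is \Cref{conj:parity}, which the paper does \emph{not} prove: it is recorded as a conjecture and immediately followed by the citation of \cite{GGGL} establishing that it is false for $n \geq 12$. There is therefore no proof in the paper to compare against.

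Your proposal is appropriate for this situation. You correctly reduce to the monomial case via a one-parameter degeneration, correctly invoke upper semicontinuity to get $\dim_{\BC} T_I\HH_n \le \dim_{\BC} T_{I_0}\HH_n$, and correctly isolate the uncontrolled quantity as the parity of the defect $\delta(I)$. Your diagnosis that the symmetric obstruction theory organises weights only at the fixed locus, and gives no handle on the parity of the jump along the orbit, is accurate; the counterexamples of \cite{GGGL} are precisely ideals with $\delta(I)$ odd, so your identification of the obstruction matches what actually occurs.
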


Maulik--Nekrasov--Okounkov--Pandharipande proved the parity conjecture for monomial ideals \cite[Thm.~2]{MNOP1} (see also \cite[Lemma 4.1\,(c)]{BFHilb}) and Ramkumar--Sammartano proved it for homogeneous ideals \cite[Thm.~1]{Ritvik-Sammartano}. However, we have the following recent result.

\begin{theorem}[Giovenzana--Giovenzana--Graffeo--Lella \cite{GGGL}]
\Cref{conj:parity} is false for $n \geq 12$.
\end{theorem}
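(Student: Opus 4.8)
The plan is to reduce the statement to the construction of a single ideal at a fixed colength $n_0$, and then to propagate it to all $n\ge n_0$. Suppose we can exhibit an ideal $I_0\subset R:=\BC[x,y,z]$ of colength $n_0$ whose tangent space to the Hilbert scheme has the ``wrong'' parity, i.e.
\[
\dim_\BC T_{I_0}\HH_{n_0}=\dim_\BC\operatorname{Hom}_R(I_0,R/I_0)\not\equiv n_0\pmod 2 .
\]
Let $Z_0\subset\BA^3$ be the corresponding length-$n_0$ subscheme, and for $n\ge n_0$ choose $n-n_0$ distinct closed points $p_1,\dots,p_{n-n_0}$ of $\BA^3$ disjoint from $Z_0$; then $Z:=Z_0\sqcup\{p_1\}\sqcup\cdots\sqcup\{p_{n-n_0}\}$ has length $n$. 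By the standard additivity of Hilbert schemes of points under disjoint union, $\HH_n$ is \'etale-locally at $[Z]$ the product of $\HH_{n_0}$ near $[Z_0]$ with $n-n_0$ copies of $\HH_1=\BA^3$, so
\[
\dim_\BC T_Z\HH_n=\dim_\BC T_{I_0}\HH_{n_0}+3(n-n_0),
\]
and hence $\dim_\BC T_Z\HH_n-n\equiv\dim_\BC T_{I_0}\HH_{n_0}-n_0\pmod 2$, which is odd by hypothesis. Thus one counterexample at $n_0$ yields a counterexample at every $n\ge n_0$, and it remains to find $I_0$ at $n_0=12$.

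Now for the construction of $I_0$. By \cite{Ritvik-Sammartano} it cannot be homogeneous, and by \cite{MNOP1,BFHilb} it cannot be monomial, so we are forced into the genuinely non-graded part of the punctual Hilbert scheme, and we may take $Z_0$ supported at the origin. The approach I would take is to fix a monomial ideal $M\subset R$ of colength $12$ (for which $\dim_\BC\operatorname{Hom}_R(M,R/M)$ is even) and to work inside the affine chart $U_M\subset\HH_{12}$ parametrizing ideals $I$ for which the monomials outside $M$ form a $\BC$-basis of $R/I$; one then searches for a necessarily non-homogeneous point $[I_0]\in U_M$ at which $\dim_\BC\operatorname{Hom}_R(I_0,R/I_0)$ is odd --- such an $I_0$ being a Gr\"obner degeneration of $M$ along a direction in which the tangent dimension has dropped by an \emph{odd} amount. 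Reading off the positive-weight tangent directions from the $\BZ^3$-grading on $\operatorname{Hom}_R(M,R/M)$ cuts down the search. Concretely this is an explicit computer-algebra task: run through the monomial ideals of colength $12$ in three variables, and in each chart $U_M$ through well-chosen test ideals $I$, computing $\dim_\BC\operatorname{Hom}_R(I,R/I)$ by a Gr\"obner-basis calculation and looking for an odd output. A useful structural expectation guiding the search is that the resulting $[I_0]$ should sit at a singular point of $\HH_{12}$ --- for instance on the intersection of the smoothable component with another, or on a small ``extra'' component --- this being the geometric reason that the identity $\dim_\BC T_I\HH_n\equiv n\pmod 2$ is allowed to fail at all.

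The hard part is exactly this construction, for two reasons. First, there is no a priori guarantee that such an $I_0$ exists; moreover the method has no slack, since the parity identity in fact persists for $n\le 11$, so the search must succeed at the very first available colength $n_0=12$. Second, once a candidate $I_0$ is produced, one must \emph{certify} that $\dim_\BC\operatorname{Hom}_R(I_0,R/I_0)$ is odd, not merely estimate it: as $I_0$ is non-homogeneous this number is the corank of a large matrix over the rationals, and an accidental rank drop must be excluded --- so the computation should be run in exact arithmetic, or via a verified Gr\"obner basis of the module of homomorphisms, and ideally cross-checked by an independent implementation. With such a certified $I_0$ of colength $12$ in hand, the reduction of the first paragraph upgrades it to a counterexample for every $n\ge 12$, completing the proof.
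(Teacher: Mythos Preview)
The paper does not prove this theorem at all; it merely quotes it as a result of Giovenzana--Giovenzana--Graffeo--Lella and cites \cite{GGGL}. So there is no proof in the paper to compare your proposal against.

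That said, your proposal is not itself a proof. The reduction in your first paragraph is correct and standard: a counterexample $I_0$ at colength $n_0$ propagates to every $n\ge n_0$ by disjoint union with reduced points, since the tangent space splits and each added point contributes $3$ to the dimension. The gap is that you never produce $I_0$. Your second and third paragraphs describe a \emph{search strategy} --- pick a monomial chart $U_M\subset\HH_{12}$, look for non-homogeneous ideals with odd $\dim_\BC\operatorname{Hom}_R(I,R/I)$, certify the rank computation --- but you do not carry it out, do not write down the ideal, and do not report the tangent dimension. The entire content of the theorem is the existence of that one explicit ideal; without it, what you have written is a plan, not a proof. (Your side assertion that parity holds for all $n\le 11$ is also unproved here and would itself require justification.)
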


\smallbreak
We shall prove the following.

\begin{theorem}[\Cref{thm:main-theorem--body}]\label{thm:main-theorem}
Let $Y$ be a scheme of finite type over $\BC$. Fix a sign $\sigma \in \set{\pm 1}$. If $\nu_Y \equiv \sigma$, then every irreducible component $Z \subset Y$ is generically reduced and a general point $p \in Z$ satisfies $\dim_{\BC} T_pY = \dim Z$ and
\[
(-1)^{\dim_{\BC} T_pY} = \sigma = (-1)^{\dim Z}.
\]
\end{theorem}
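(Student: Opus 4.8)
The key is to understand the Behrend function via Behrend's original microlocal/local description. Recall that if $Y$ is embedded in a smooth variety $M$ and locally cut out nicely, there is the local expression $\nu_Y(p) = (-1)^{\dim M}(1 - \chi(\text{Milnor fiber}))$ for hypersurface singularities, but more robustly we should use the characterization via the intrinsic normal cone: $\nu_Y = \operatorname{Eu}_Y(\mathfrak{c}_Y)$ where $\mathfrak{c}_Y$ is Behrend's signed combination of components of $C_Y M$, and $\operatorname{Eu}$ is the local Euler obstruction. Since local Euler obstruction is an étale-local (even analytic-local) invariant, I will reduce to a local statement at a general point $p$ of a component $Z$.

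**Step 1: Reduce to a general point of a single component.** Let $Z \subseteq Y$ be an irreducible component, $d = \dim Z$, and let $p \in Z$ be a general point — in particular $p$ lies on no other component, and $Z$ is "as nice as possible" near $p$ (the reduced scheme $Z_{\mathrm{red}}$ is smooth at $p$, and the scheme structure of $Y$ near $p$ is "translation-invariant along $Z_{\mathrm{red}}$", i.e. by generic smoothness and miracle flatness $Y$ is, étale-locally near $p$, isomorphic to $Z_{\mathrm{red}} \times \operatorname{Spec} A$ for a local Artinian $\BC$-algebra $A$ — this is the usual generic fibration argument). Then $T_pY = T_p Z_{\mathrm{red}} \oplus \mathfrak{m}_A/\mathfrak{m}_A^2$, so $\dim_\BC T_pY = d + \dim_\BC (\mathfrak{m}_A/\mathfrak{m}_A^2)$, and $Z$ is generically reduced iff $A = \BC$ iff $\dim_\BC T_pY = d$.

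**Step 2: Compute the Behrend function in this local model.** I would use the product/multiplicativity behaviour: for a smooth factor $\nu_{Z_{\mathrm{red}} \times X}(z,x) = (-1)^d \nu_X(x)$, reducing to computing $\nu_{\operatorname{Spec} A}(\text{pt})$ for the Artinian algebra $A$. The hypothesis $\nu_Y \equiv \sigma$ forces $(-1)^d\nu_{\operatorname{Spec} A}(\ast) = \sigma$ at the general point, and also, because $Y$ is a scheme of finite type with $\nu_Y$ constant, the \emph{same} value $\sigma$ occurs at \emph{every} point of $Y$. Now the crucial input is the known fact (Behrend; see also the "one point" discussion in \cite{Graffeo-Ricolfi}) that $\nu_{\operatorname{Spec} A}(\ast)$ is a positive integer when $A$ is Artinian local, with $\nu_{\operatorname{Spec} A}(\ast) = 1$ iff $A \cong \BC$, and more generally $\nu_{\operatorname{Spec}A}(\ast) \geq 1$ always, with strict inequality as soon as $\operatorname{length} A \geq 2$ — in fact $\nu$ of a fat point is $(-1)^0 \cdot (\text{something} \geq 1)$ since the ambient smooth space can be taken to be $\mathbb{A}^{\dim \mathfrak{m}/\mathfrak{m}^2}$ and the Behrend function of a point is $1 - \chi(F)$ up to sign where $F$ is a Milnor fiber that is a bouquet of spheres of the relevant dimension, contributing with a definite sign. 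This positivity pins down $\nu_{\operatorname{Spec} A}(\ast) \geq 1$, hence $\sigma = (-1)^d \cdot (\geq 1)$; since $\sigma = \pm 1$ and the factor is a positive integer, the factor must be exactly $1$, forcing $A \cong \BC$ (generic reducedness of $Z$) and $\sigma = (-1)^d = (-1)^{\dim Z}$; combined with Step 1 this also gives $\dim_\BC T_pY = d$ and $(-1)^{\dim_\BC T_pY} = \sigma$.

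**Main obstacle.** The delicate point is the \emph{sign/positivity statement for fat points}: one needs that for any Artinian local $\BC$-algebra $A$ embedded in $\operatorname{Spec}\BC[x_1,\dots,x_e]$ (with $e = \dim \mathfrak m_A/\mathfrak m_A^2$), the Behrend function value $\nu_{\operatorname{Spec}A}(0)$ is a \emph{positive} integer, and equals $1$ only for $A = \BC$. The clean way I would argue this is: $\nu_{\operatorname{Spec} A}(0)$ equals the local Euler obstruction of Behrend's cycle $\mathfrak c$ on the cone $C = C_{\operatorname{Spec}A}\mathbb{A}^e$ at the origin; since $\operatorname{Spec}A$ is a point, $C$ is an affine cone (a union of linear subspaces, with multiplicities, of total dimension $e$) sitting in $\mathbb{A}^e$, Behrend's cycle $\mathfrak c = \sum (-1)^{\dim C_i}(1+\dim C_i \text{-ish multiplicities})[\ldots]$ — more precisely $\mathfrak{c}$ is supported on $\mathbb{A}^e$ with some positive coefficient when interpreted correctly — and the local Euler obstruction of $\mathbb A^e$ at a point is $1$, so $\nu$ picks up exactly the multiplicity, which is $\geq 1$ and $= 1$ iff the cone is reduced of the expected form iff $A = \BC$. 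Getting this bookkeeping exactly right — including verifying the length/embedding-dimension inequality that makes "$\operatorname{length} A \geq 2 \Rightarrow \nu \geq 2$" or at least "$\neq$ a pure sign unless $A = \BC$" — is where the real work lies; everything else is the standard generic-fibration and multiplicativity formalism for Behrend functions.
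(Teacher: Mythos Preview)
Your Step~1 contains a genuine error: the claimed \'etale-local product decomposition $Y \cong Z_{\red} \times \Spec A$ near a general point is \emph{false} in general. The phrase ``generic smoothness and miracle flatness'' yields at best a flat retraction $\pi\colon Y \to Z_{\red}$ over a dense open, but flatness does not force local triviality once the fibres have moduli. Concretely, take a nonconstant morphism from a smooth curve $C$ into a positive-dimensional locus in the moduli of local Artinian $\BC$-algebras of some fixed length (such loci exist) and pull back the universal family to obtain $Y \to C$; then $Y$ is irreducible with $Y_{\red}=C$, yet no \'etale or even formal neighbourhood of a general point of $C$ sees a product, because the isomorphism type of the fibre varies. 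Both your tangent-space splitting $T_pY = T_pZ_{\red}\oplus \mathfrak m_A/\mathfrak m_A^2$ and the multiplicativity $\nu_Y(p)=(-1)^d\nu_{\Spec A}(\ast)$ rest on this decomposition, so Steps~1--2 collapse as written.

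The paper avoids this entirely. Instead of trivialising $Y$ along $Z_{\red}$, it works directly with the normal cone $C=C_{Z/M}$ of a closed embedding $Z\into M$ into a smooth scheme. At a point $p$ chosen away from the other components of $Y$, away from the images $\overline D\subsetneq Z$ of non-dominant cone components, and inside the smooth locus of $Z_{\red}$, the defining formula for $\nu_Z$ collapses to
\[
\nu_Z(p)=(-1)^{\dim Z}\cdot \mathsf m,\qquad \mathsf m=\sum_{\overline D = Z}\mult_D(C)\in\BZ_{>0}.
\]
This is exactly the positivity you were reaching for, obtained with no product structure and no separate fat-point computation. From $\sigma=(-1)^{\dim Z}\mathsf m$ one reads off $\mathsf m=1$ and $\sigma=(-1)^{\dim Z}$; then the zero section of the cone gives an injection $\OO_{Z,\eta}\hookrightarrow\OO_{C,\xi_D}$ into a reduced ring (since $\mult_D(C)=1$), whence $Z$ is generically reduced, and the tangent-space identity follows trivially. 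Your self-identified ``main obstacle'' --- analysing $\nu_{\Spec A}(\ast)$ for a fat point --- is therefore a detour: the positive integer one needs is already visible in $C_{Z/M}$, and the step ``$\mathsf m=1\Rightarrow$ generically reduced'' is the same zero-section trick, just run for $Z$ rather than for a hypothetical fibre.
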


In particular, when $Y=\Hilb^n(\BA^3)$, \Cref{conj:behrend} implies a `generic' version of \Cref{conj:parity}. Such link between the two above conjectures was already stated in \cite{Morrison-Behrend}, but the proof of \Cref{thm:main-theorem} gives full details. Such details are needed to support the point of this paper, which is to provide a theoretical path (starting from the failure of \Cref{conj:parity}), to \emph{disprove} \Cref{conj:behrend}.

\smallbreak
Consider the Quot scheme of points $\mathrm{Q}_{r,n} = \Quot_{\BA^3}(\OO_{\BA^3}^{\oplus r},n)$, parametrising isomorphism classes of $\BC[x,y,z]$-linear quotients $\OO_{\BA^3}^{\oplus r} \onto T$, where $\dim_\BC T = n$. 
\Cref{eqn:signs} generalises to 
\begin{equation}\label{eqn:sign_higher-rank}
(-1)^{rn} = (-1)^{\dim_{\BC} T_p \mathrm{Q}_{r,n}} = \nu_{\mathrm{Q}_{r,n}}(p),
\end{equation}
for every $\mathbb G_m^{3+r}$-fixed point $p \in \mathrm{Q}_{r,n}$, i.e.~for $p$ corresponding to a direct sum of monomial ideals \cite{BR18,FMR_K-DT}.

If $\mathrm{Q}_{r,n}$ has constant Behrend function --- which cannot be excluded just yet --- then it cannot contain any generically nonreduced component. Finding such components, or even just nonreduced points, is an active research direction, for which we refer the reader to \cite{Jelisiejew-Pathologies,Jelisiejew-sivic,Szachniewicz}. The case of $\BA^3$ is in some sense the last mystery in the land of pathologies on Quot schemes of smooth varieties, for it finds itself sandwiched between smooth Hilbert schemes or mildly singular Quot schemes ($\BA^d$ case, for $d \leq 2$) and Quot schemes that happen to admit generically nonreduced components ($\BA^d$ case, for $d > 3$). 

Finally, the author wants to point out that the title of this work is in homage to P.~Tingley's paper \emph{A minus sign that used to annoy me but now I know why it is there} \cite{zbMATH06990163}. The ubiquity of signs in Donaldson--Thomas theory, and in particular the one in \Cref{eqn:sign_higher-rank}, has annoyed (read: fascinated) the author for some time. Even though we know the theoretical reasons \emph{why it is there}, we felt like writing this note to highlight the structural aspects of the theory, still to be understood, surrounding this sign.

\section{Local Euler obstruction and the Behrend function}
In this section we fix our notation and recall the definition of the Behrend function from \cite{Beh}.

All schemes are of finite type over $\BC$. The group of cycles on a scheme $Y$ is denoted $\mathrm{Z}_\ast Y$. An irreducible scheme $Y$, with generic point $\eta \in Y$, is \emph{generically reduced} if the local ring $\OO_{Y,\eta}$ is reduced. The \emph{multiplicity} $\mult_Z(Y)$ of an irreducible component $Z \subset Y$, with generic point $\eta$, is defined as the length of the local artinian ring $\OO_{Y,\eta}$. We assume all schemes admit a closed embedding in a smooth $\BC$-scheme; this assumption is never necessary, but is satisfied in all applications we have in mind.

\subsection{MacPherson's local Euler obstruction}
Let $Y$ be a scheme. The abelian group of constructible functions $Y(\BC) \to \BZ$ on a scheme $Y$ is denoted $\Con(Y)$. The \emph{local Euler obstruction} of $Y$ is an isomorphism of abelian groups
\[
\begin{tikzcd}
\Eu \colon \mathrm{Z}_\ast Y \arrow{r}{\sim} & \Con(Y),
\end{tikzcd}
\]
discovered by MacPherson \cite[Lemma 2]{MacPherson-Eu}. We now recall its purely algebraic definition, different from the original trascendental definition by MacPherson; see \cite{Jiang-Eu} for a modern survey on this topic. Let $V \into M$ be a closed immersion of a $d$-dimensional integral scheme $V$ inside a smooth scheme $M$. Let $\Gr_d(\CT_M) \to M$ be the Grassmann bundle of $d$-planes in the fibres of the tangent bundle $\CT_M$, and denote by $V_{\sm}\subset V$ the smooth locus of $V$, which is open and nonempty \cite[\href{https://stacks.math.columbia.edu/tag/056V}{Tag 056V}]{stacks-project}. We have a canonical section
\[
\mathsf s\colon V_{\sm} \to \Gr_d(\CT_M), \quad y \mapsto T_yV_{\sm}.
\]
Let $\widehat V \subset \Gr_d(\CT_M)$ be the closure of the image of $\mathsf s$. The map
\[
\mathsf n \colon \widehat V \to V
\]
restricting the projection $\Gr_d(\CT_M) \to M$ is called the \emph{Nash blowup} of $V\into M$. Its base change along $V_{\sm} \into V$ is an isomorphism. Let $\CU$ denote the universal rank $d$ bundle on $\Gr_d(\CT_M)$. The bundle $\widehat{\CT}_V = \CU|_{\widehat V}$ is called the \emph{Nash tangent bundle}.

Now back to defining $\Eu \colon \mathrm{Z}_\ast Y \to \Con(Y)$.
Let $V\subset Y$ be a prime cycle on $Y$, i.e.~a generator of $\mathrm{Z}_\ast Y$. 
Let $y \in Y$ be a closed point. The integer
\[
\Eu(V)(y) = \int_{\mathsf n^{-1}(y)} c(\widehat{\CT}_V) \cap s(\mathsf n^{-1}(y),\widehat V)
\]
is called the local Euler obstruction of $V$ at the point $y \in Y$. Here $s(\mathsf n^{-1}(y),\widehat V)$ denotes the Segre class of the normal cone to the closed immersion $\mathsf n^{-1}(y)\into \widehat V$. The map $\Eu$ is defined by $\BZ$-linear extension. By \cite[Section 3]{MacPherson-Eu}, $\Eu(V)(y)$ is equal to:
\begin{itemize}
    \item [$\circ$] $0$, if $y \in Y \setminus V$,
    \item [$\circ$] $1$, if $y$ is a nonsingular point on $V$,
    \item [$\circ$] $\mult_yY$, if $y$ is a closed point on an integral curve $Y=V$,
    \item [$\circ$]
    $d(2-d)$ is $V\subset \BA^3$ is the cone over a smooth degree $d$ plane curve $X \into \BP^2$ and $y \in V$ is the vertex of the cone.
\end{itemize}
The last bullet was generalised by Aluffi, who proved that if $X \subset \BP^{n-1}$ is a smooth curve of degree $d$ and genus $g$, and if $y$ is the vertex of the cone $V \subset \BP^n$ over $X$, then $\Eu(V)(y) = 2-2g-d$ \cite[Ex.~3.19]{zbMATH06823248}. 

\subsection{The Behrend function}
Behrend proved that any finite type $\BC$-scheme $Y$ carries a canonical cycle $\mathfrak c_Y \in \mathrm{Z}_\ast Y$, whose definition we now briefly recall.

First of all, suppose given a scheme $U$ and a closed immersion $U \into M$ inside a smooth scheme $M$, cut out by the ideal sheaf $\mathscr I \subset \OO_M$. Consider the normal cone
\[
\begin{tikzcd}
    C_{U/M} = \mathbf{Spec}_{\OO_U} \left(\displaystyle\bigoplus_{e \geq 0}\mathscr I^e/\mathscr I^{e+1}\right) \arrow{r}{\pi} & U.
\end{tikzcd}
\]
Note that if $D$ is an irreducible component of $C_{U/M}$, then $\pi(D)$ is an irreducible closed subset of $X$, and as such it defines a cycle $\pi(D) \in \mathrm{Z}_\ast U$.
The \emph{signed support of the intrinsic normal cone}, introduced by Behrend in \cite[Sec.~1.1]{Beh}, is the cycle
\[
\mathfrak c_{U/M} = \sum_{D \subset C_{U/M}} (-1)^{\dim \pi(D)} \mult_D(C_{U/M})\cdot\pi(D) \in \mathrm{Z}_\ast U,
\]
the sum being over the irreducible components of the normal cone $C_{U/M}$.

Now back to our scheme $Y$. The canonical cycle $\mathfrak c_Y \in \mathrm{Z}_\ast Y$ is defined as follows: it is the unique cycle with the property that for any \'etale map $U \to Y$ and for any closed immersion $U \into M$ inside a smooth scheme, one has $\mathfrak c_{U/M} = \mathfrak c_Y|_U$. See \cite[Prop.~1.1]{Beh} for the proof that this is a good definition. In particular, if $Y$ itself admits a closed immersion inside a nonsingular scheme $M$, then $\mathfrak c_Y = \mathfrak c_{Y/M}$. 

\begin{definition}[{\cite[Def.~1.4]{Beh}}]
Let $Y$ be a scheme of finite type over $\BC$. The \emph{Behrend function} of $Y$ is the constructible function $\nu_Y = \Eu(\mathfrak c_Y)$.
\end{definition}

\begin{example}
If $Y$ is a smooth connected scheme of dimension $d$, then $\mathfrak c_Y = (-1)^{d}[Y]$, so that $\nu_Y \equiv (-1)^{d}$. The Behrend function of a fat point --- a scheme with only one point --- is also trivially constant, but hard to compute in general (e.g.~for embedding dimension higher than $3$), see \cite{Graffeo-Ricolfi}. 
\end{example}

\begin{remark}\label{rmk:nu-for-irr-cpt}
The Behrend function pulls back along \'etale maps, in particular $\nu_Y(p) = \nu_U(p)$ if $U$ is open in $Y$ and $p \in U$.
Let $Y$ be a scheme, $Z \subset Y$ an irreducible component, $p \in Z$ a closed point. Assume there is an open subset $U \subset Z$ containing $p$, and not intersecting any other irreducible component of $Y$. Then $U$ is also open in $Y$, so
\[
\nu_Z(p) = \nu_U(p) = \nu_Y(p).
\]
\end{remark}

Assume $Y$ is an irreducible scheme admitting a closed immersion into a smooth scheme $M$. Let $C=C_{Y/M}$ be the normal cone, with projection $\pi\colon C \to Y$, and write $\overline D = \pi(D)$ for an irreducible component $D$ of $C$. 

Let $p \in Y$ be a point. Then, since $\nu_Y = \Eu(\mathfrak c_{Y/M})$, we have
\begin{equation}\label{eqn:nu1}
\nu_Y(p) = 
\sum_{\substack{D\subset C \\ \overline D \ni p}}(-1)^{\dim \overline{D}}\mult_D(C)\cdot \Eu(\overline{D})(p).
\end{equation}
There are two possibilities for each $D$ in the sum: either $\overline{D}=Y$, or $\overline{D}\neq Y$. Therefore Formula \eqref{eqn:nu1} becomes
\begin{multline}\label{eqn:nu3}
\nu_Y(p) = \sum_{\substack{D\subset C \\ Y \neq \overline D \ni p}}(-1)^{\dim \overline{D}}\mult_D(C)\cdot \Eu(\overline{D})(p) \\
+ (-1)^{\dim Y} \Eu(Y)(p) \sum_{\substack{D \subset C \\ \overline D = Y}}\mult_D(C).
\end{multline}

We conclude this section with a few examples.

\begin{example}
Here we give an example of an irreducible scheme $Y$ which is generically reduced, but whose Behrend function is not constantly equal to the same sign. Take $Y = \Spec \BC[x,y]/(y^2,xy)\subset \BA^2$. It is smooth everywhere except at the origin $0 \in Y \subset \BA^2$, where an embedded point is located. We have $\nu_Y|_{Y \setminus 0} \equiv -1$, but $\nu_Y(0) = 1$. Indeed, as computed in Examples 3.5 and 4.8 of \cite{VFC_working_math}, the normal cone $C=C_{Y/\BA^2}$ has two irreducible components $D_1$ and $D_2$, where $D_1$ (called $C_0$ in loc.~cit.) has multiplicity $2$ and is contracted by $\pi \colon C \to Y$ onto $0 \in Y$, and $D_2$ (called $L$ in loc.~cit.) has multiplicity $1$ and dominates $Y$. We therefore have
\begin{multline*}
\nu_Y(0) 
= (-1)^0 \mult_{D_1}(C)\Eu(\overline D_1)(0) + (-1)^{\dim Y} \mult_{D_2}(C)\Eu(\overline D_2)(0) \\ = 1\cdot 2\cdot 1 + (-1)\cdot 1 \cdot 1 = 1.
\end{multline*}
\end{example}

\begin{example}\label{ex:3lines}
Here we give an example of a singular, reducible and reduced scheme $Y$ with constant Behrend function. Let $Y \subset \BA^3$ be the union of the three coordinate axes in $\BA^3$, namely $Y = \Spec \BC[x,y,z]/J$ where $J = (xy,xz,yz)$. We claim that $\nu_Y \equiv -1$. The only nontrivial value to compute is $\nu_Y(0)$, where $0$ denotes the origin of $\BA^3$. We call $L_1,L_2,L_3$ the coordinate axes, and we set $f_0=xy$, $f_1=xz$ and $f_2 = yz$. Fix homogeneous coordinates $w_0,w_1,w_2$ on $\BP^2$, and form the blowup
\[
\varepsilon \colon \Bl_{Y}\BA^3 \into \BA^3 \times \BP^2 \to \BA^3,
\]
which is cut out by the ideal
\[
\mathscr I = (w_2f_0-w_1f_1,w_2f_0-w_0f_2) \subset \OO_{\BA^3 \times \BP^2}.
\]
Therefore the exceptional divisor $E_{Y}\BA^3 = V(\varepsilon^{-1}(J)\cdot \OO_{\Bl_{Y}\BA^3})$ is cut out by the relations
\[
w_2f_0=w_1f_1,\quad w_2f_0=w_0f_2,\quad xy=xz=yz=0.
\]
From these, one can check that $E_{Y}\BA^3$ consists of irreducible components $E_1,\ldots,E_4$ with $E_i \cong \BA^1 \times \BP^1$ for $i=1,2,3$, and $E_4 \cong \BP^2$. The multiplicities are $\mult_{E_i}E_{Y}\BA^3 = 1$ for $i=1,2,3$ and $\mult_{E_4}E_{Y}\BA^3 = 2$. The component $E_i$ dominates $L_i$ for $i=1,2,3$, whereas $E_4$ is collapsed onto $\set{0}$. Therefore
\[
\nu_Y(0) = 3 \cdot (-1)^{\dim L_1}\cdot 1\cdot \Eu(L_1)(0) + (-1)^0\cdot 2 \cdot \Eu(\set{0})(0) = -3+2 = -1.
\]
See also \Cref{conclusions:crit-xyz} for one more comment on this example.
\end{example}

\begin{example}
Let $Y = \Hilb^4(\BA^3)$ be the Hilbert scheme of $4$ points on $\BA^3$. It is an irreducible scheme of dimension $12$, with singular locus equal to the closed subscheme $\BA^3 \subset Y$ parametrising the squares $\mathfrak m_x^2$ of the maximal ideals of closed points $x \in \BA^3$. This is the `first' singular Hilbert scheme. Its Behrend function \emph{is} in fact constant, 
\[
\nu_Y \equiv 1.
\]
Since the Behrend function of a smooth irreducible scheme $R$ is constantly equal to $(-1)^{\dim R}$, we only need to check that $\nu_Y(\mathfrak m_x^2) = 1$ for all $x \in \BA^3$. In fact, each point $\mathfrak m_x^2$ is a translation of the monomial point $I=\mathfrak m_0^2$ (the only singular monomial ideal), so it is enough to confirm that $\nu_Y(I) = 1$. But this follows from \cite[Thm.~3.4]{BFHilb}, which implies
\[
\nu_Y(I) = (-1)^{\dim_{\BC}T_{I}Y} = (-1)^{18} = 1.
\]
\end{example}

\section{Proof of the main theorem}

In this section we prove \Cref{thm:main-theorem}. 

\begin{theorem}\label{thm:main-theorem--body}
Let $Y$ be a scheme of finite type over $\BC$. Fix a sign $\sigma \in \set{\pm 1}$. If $\nu_Y \equiv \sigma$, then every irreducible component $Z \subset Y$ is generically reduced and a general point $p \in Z$ satisfies $\dim_{\BC} T_pY = \dim Z$ and
\begin{equation}\label{eqn:two-formulas}
    (-1)^{\dim_{\BC} T_pY} = \sigma = (-1)^{\dim Z}.
\end{equation}
\end{theorem}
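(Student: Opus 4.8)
The strategy is to work one irreducible component $Z \subset Y$ at a time, and pass to a general point $p \in Z$ where the geometry is as simple as possible. First I would restrict attention to a suitable open neighbourhood: by generic flatness-type arguments and \Cref{rmk:nu-for-irr-cpt}, shrinking $Z$ to an open set $U$ meeting no other component of $Y$, I may assume $Y$ is irreducible and $\nu_Y \equiv \sigma$ on all of $Y$. Since $Y$ is irreducible it admits a closed embedding $Y \into M$ into a smooth scheme, and I form the normal cone $C = C_{Y/M}$ with projection $\pi \colon C \to Y$. The key point is to choose $p$ generic enough that: (i) $p \in Z_{\sm}$ so $\Eu(Y)(p) = 1$; (ii) $p$ avoids the images $\overline{D} = \pi(D)$ of all those components $D \subset C$ with $\overline{D} \subsetneq Y$ — there are finitely many such $D$, and each $\overline{D}$ is a proper closed subset, so their union is a proper closed subset of $Y$ and a general $p$ lies outside it. With such a $p$, Formula \eqref{eqn:nu3} collapses to
\[
\sigma = \nu_Y(p) = (-1)^{\dim Y}\sum_{\substack{D \subset C \\ \overline D = Y}} \mult_D(C).
\]

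The sum on the right is a sum of \emph{positive} integers, so it is a positive integer $N$, and we get $\sigma = (-1)^{\dim Y} N$. Since $\sigma = \pm 1$ this forces $N = 1$, i.e. there is exactly one component $D_0 \subset C$ dominating $Y$ and it has multiplicity one; and moreover $(-1)^{\dim Y} = \sigma$. This already gives $(-1)^{\dim Z} = \sigma$ (as $\dim Z = \dim Y$ after our reduction). To extract generic reducedness, I would use the fact that the component of $C_{Y/M}$ dominating $Y$, together with its multiplicity, is controlled by the local ring at the generic point $\eta$ of $Y$: the multiplicity of the dominant component of the normal cone of $\Spec \OO_{Y,\eta} \into (\text{smooth})$ equals $\length \OO_{Y,\eta} = \mult_Z(Y)$ (one can see this by base-changing the normal cone construction to the local ring at $\eta$, where $M$ becomes regular local and the normal cone of an Artinian quotient is itself, of dimension $0$, with length equal to that of $\OO_{Y,\eta}$). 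Hence $N = 1$ forces $\mult_Z(Y) = 1$, i.e. $\OO_{Y,\eta}$ is reduced, which is exactly generic reducedness of $Z$.

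Finally, for the tangent-space statement: at a general point $p \in Z$, generic reducedness (hence generic smoothness, since we are over a field of characteristic $0$ — by \cite[\href{https://stacks.math.columbia.edu/tag/056V}{Tag 056V}]{stacks-project} the smooth locus is dense in a generically reduced, hence reduced near $\eta$, irreducible scheme) gives that $p$ may be taken in $Z_{\sm}$, where $\dim_{\BC} T_p Y = \dim_{\BC} T_p Z = \dim Z$. Combined with $(-1)^{\dim Z} = \sigma$ from the previous step, this yields $(-1)^{\dim_{\BC} T_p Y} = \sigma = (-1)^{\dim Z}$, which is \eqref{eqn:two-formulas}.

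**Main obstacle.** The delicate step is the passage from "$N = 1$" to "$\mult_Z(Y) = 1$", i.e. identifying the multiplicity of the dominant component of the normal cone $C_{Y/M}$ with the length of the Artinian local ring $\OO_{Y,\eta}$. This requires being careful about how the normal cone behaves under the non-finite-type base change $Y \rightsquigarrow \Spec \OO_{Y,\eta}$, and about the fact that passing to the generic point kills all the lower-dimensional (embedded/contracted) components of $C$ while preserving the dominant one with its multiplicity. Everything else — the collapse of \eqref{eqn:nu3}, the sign bookkeeping, and the tangent-space count on the smooth locus — is routine once the general point $p$ has been chosen to avoid the finitely many bad loci.
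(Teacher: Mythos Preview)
Your overall strategy matches the paper's almost step for step: pass to a single component via \Cref{rmk:nu-for-irr-cpt}, collapse \eqref{eqn:nu3} at a general point, and read off $\mathsf m=1$ and $(-1)^{\dim Z}=\sigma$. Two points deserve correction.

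\smallskip
\emph{A circularity in step (i).} You choose $p\in Z_{\sm}$ to get $\Eu(Y)(p)=1$, but $Z_{\sm}$ may well be empty before you know $Z$ is generically reduced (think of a thickened line). The paper avoids this by noting that the local Euler obstruction is defined on prime cycles, so $\Eu(Z)(p)=\Eu(Z_{\red})(p)$, and then takes $p$ in the smooth locus of $Z_{\red}$, which is always dense. This is a small fix, but without it your step (i) presupposes the conclusion.

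\smallskip
\emph{The generic reducedness step.} Here your argument diverges from the paper's and contains a genuine gap. Your parenthetical justification (``the normal cone of an Artinian quotient is itself, of dimension $0$'') is incorrect: for $A=R/I$ Artinian with $R$ regular local of dimension $c$, the cone $\Spec\gr_I R$ has pure dimension $c$, not $0$. Worse, the identity you aim for, $N=\length\OO_{Y,\eta}$, is \emph{false} in general. For $R=k[[x,y]]$ and $I=\mathfrak m^2$ one has $\length A=3$, yet $\gr_I R$ has the unique minimal prime $(x,y)$ with $(x,y)^2=0$ and, after localising, $y=(t/s)x$; so the localisation has length $2$, i.e.\ $N=2\neq 3$. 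Thus $N=1$ cannot be converted into $\length\OO_{Y,\eta}=1$ by your route.

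The paper's argument is different and short: from $\mathsf m=1$ there is a unique dominant component $D\subset C$ with $\mult_D(C)=1$, hence $\OO_{C,\xi_D}$ is reduced; the zero section $\tau\colon Z\to C$ splits $\pi$, so the induced map $\OO_{Z,\eta}\to\OO_{C,\xi_D}$ is injective, and a subring of a reduced ring is reduced. That replaces your base-change-to-$\eta$ multiplicity computation entirely.
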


\begin{proof}
Let $Z \subset Y$ be an irreducible component. Consider the open subset 
\begin{equation}\label{eqn:open-away-from-components}
W = Y \setminus \bigcup_{Z'\neq Z}Z' \subset Y,
\end{equation}
the union being over the irreducible components $Z' \subset Y$ different from $Z$. Then for $p\in W$, one has
\begin{equation}\label{eqn:nu7}
\sigma = \nu_Y(p) = \nu_Z(p), 
\end{equation}
the second identity being implied by \Cref{rmk:nu-for-irr-cpt}.

Let $Y \into M$ be a closed immersion inside a smooth scheme $M$. Let $C = C_{Z/M}$ be the normal cone to $Z$ in $M$, with projection $\pi \colon C \to Z$. Let $W' \subset Z$ be the (open) complement of the closed subset
\[
\bigcup_{\substack{D\subset C \\ \overline{D} \neq Z}} \overline{D} \subset Z,
\]
where $\overline{D}=\pi(D)$ as before. Note that $W'$ might equal $Z$, but cannot be empty. Then, for $p \in W \cap W'$, one has
\begin{equation} \label{eqn:nu6}
\nu_Z(p) = (-1)^{\dim Z} \Eu(Z)(p) \sum_{\substack{D\subset C \\ \overline{D}=Z}}\mult_D(C)
\end{equation}
by Formula \eqref{eqn:nu3}. However, we have
\[
\Eu(Z)(p) = \Eu(Z_{\red})(p),
\]
and $Z_{\red}$ is a reduced scheme of finite type over $\BC$, therefore its smooth locus $W''$ is a dense open subset. Thus $\Eu(Z)(p) = 1$ for $p \in W''$, and Formula \eqref{eqn:nu6} becomes
\begin{equation}\label{eqn:nu5}
\nu_Z(p) = (-1)^{\dim Z}\sum_{\substack{D \subset C \\ \overline D = Z}}\mult_D(C),\quad p \in W \cap W' \cap W''.
\end{equation}
Thus, combining Formula \eqref{eqn:nu5} and Formula \eqref{eqn:nu7} with one another, we find
\begin{equation}\label{eqn:nu4}
\sigma = (-1)^{\dim Z}\cdot \mathsf m,
\end{equation}
where $\mathsf m \in \BZ_{>0}$ is the sum appearing in Formula \eqref{eqn:nu5}. But this forces $\mathsf m = 1$ and thus $(-1)^{\dim Z} = \sigma$, too. This proves the second identity in Formula \eqref{eqn:two-formulas}.

Next, we prove that $Z$ is generically reduced. Since $\mathsf m = 1$, there is a unique irreducible component $D \subset C$ such that $\overline{D} = Z$, and moreover $C$ is reduced at the generic point $\xi_D \in C$ corresponding to $D\subset C$, since necessarily $\mult_D(C)=1$. It follows that $Z$ is generically reduced. In a little more detail, consider the composition $\id_Z = \pi\circ\tau \colon Z \to C \to Z$ where $\tau\colon Z \to C$ is the zero section of the cone. Now, $\pi$ maps $\xi_D \in C$ to the generic point $\eta$ of $Z$. Since the composition 
\[
\OO_{Z,\eta} \to \OO_{C,\xi_D} \to \OO_{Z,\eta}
\]
is the identity, the map $\OO_{Z,\eta} \to \OO_{C,\xi_D}$ is injective. But $\OO_{C,\xi_D}$ is reduced, thus $\OO_{Z,\eta}$ is reduced, too.

Finally, we prove the first identity in Formula \eqref{eqn:two-formulas}. By the previous paragraph, there is a nonempty open reduced subscheme $U \subset Z$. In particular, $U$ contains a nonempty smooth open subset $U'\subset Z$. Consider also the open subset $W$ (open in both $Z$ and $Y$) from \Cref{eqn:open-away-from-components}. Then, for any point $p \in U' \cap W$, we have
\begin{align*}
    \dim Z
    &= \dim U' & Z\mbox{ is irreducible} \\
    &= \dim_{\BC} T_pU' & U'\mbox{ is smooth} \\
    &= \dim_{\BC} T_pZ & U' \mbox{ is open in }Z\\
    &= \dim_{\BC} T_pW & W \mbox{ is open in }Z \\
    &= \dim_{\BC} T_pY & W \mbox{ is open in }Y
\end{align*}
which finishes the proof.
\end{proof}

Fix integers $r,n \in \BZ_{>0}$. Consider the Quot scheme of points $\Quot_{\BA^3}(\OO_{\BA^3}^{\oplus r},n)$, parametrising (isomorphism classes of) length $n$ quotients $\OO_{\BA^3}^{\oplus r} \onto T$. This is the general situation we have in mind. Note that this Quot scheme has a natural embedding in a smooth quasiprojective variety $\ncQuot^n_{r}$ of dimension $2n^2+rn$, the so-called noncommutative Quot scheme \cite[Thm.~2.6]{BR18}. We obtain the following consequence of \Cref{thm:main-theorem}.

\begin{corollary}\label{cor:disprove-nu}
Let $Z \subset \Quot_{\BA^3}(\OO_{\BA^3}^{\oplus r},n)$ be an irreducible component. If either $Z$ is not generically reduced, or 
    \[
    (-1)^{\dim Z} \neq (-1)^{rn},
    \]
then the Behrend function of $\Quot_{\BA^3}(\OO_{\BA^3}^{\oplus r},n)$ is not constant.
\end{corollary}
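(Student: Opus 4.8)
The plan is to deduce this immediately from \Cref{thm:main-theorem}, once we know the value that a constant Behrend function would necessarily take. So suppose, in order to prove the contrapositive, that $\nu_{\mathrm{Q}_{r,n}}$ is constant, say $\nu_{\mathrm{Q}_{r,n}} \equiv \sigma$ with $\sigma \in \set{\pm 1}$. The first step is to identify $\sigma$. The Quot scheme $\mathrm{Q}_{r,n}$ carries a $\mathbb G_m^{3+r}$-action and has at least one fixed point: for any monomial ideals $I_1,\dots,I_r \subset \BC[x,y,z]$ with $\sum_{i=1}^r \length\bigl(\BC[x,y,z]/I_i\bigr) = n$, the quotient $\OO_{\BA^3}^{\oplus r} \onto \bigoplus_{i=1}^r \BC[x,y,z]/I_i$ is such a point, and such tuples plainly exist for $r,n \in \BZ_{>0}$. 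Evaluating the constant function $\nu_{\mathrm{Q}_{r,n}}$ at such a $p$ and invoking \eqref{eqn:sign_higher-rank} forces $\sigma = (-1)^{rn}$.

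Next I would apply \Cref{thm:main-theorem} to $Y = \mathrm{Q}_{r,n}$ with this value of $\sigma$. The hypotheses are met: $\mathrm{Q}_{r,n}$ admits a closed embedding into the smooth scheme $\ncQuot^n_r$, and we have just shown $\nu_{\mathrm{Q}_{r,n}} \equiv (-1)^{rn}$. The conclusion of the theorem then says that every irreducible component $Z \subset \mathrm{Q}_{r,n}$ is generically reduced and satisfies
\[
(-1)^{\dim Z} = \sigma = (-1)^{rn}.
\]
Reading this contrapositively: if there is an irreducible component $Z$ that is not generically reduced, or for which $(-1)^{\dim Z} \neq (-1)^{rn}$, then $\nu_{\mathrm{Q}_{r,n}}$ cannot be constant, which is exactly the statement of the corollary.

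There is no genuine obstacle here, since all the work is already packaged in \Cref{thm:main-theorem}; the corollary is essentially a restatement of that theorem in the situation of interest. The one point worth spelling out is the identification of $\sigma$ with $(-1)^{rn}$, for which one only needs a single $\mathbb G_m^{3+r}$-fixed point together with \eqref{eqn:sign_higher-rank} --- and direct sums of monomial ideals of total colength $n$ provide such points for every $r,n \in \BZ_{>0}$.
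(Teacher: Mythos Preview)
Your proposal is correct and matches the paper's approach: the paper states the corollary as an immediate consequence of \Cref{thm:main-theorem} without further proof, and you have correctly filled in the one nontrivial detail, namely identifying the constant value as $(-1)^{rn}$ via evaluation at a torus-fixed point using \eqref{eqn:sign_higher-rank}. One cosmetic quibble: you assert $\sigma \in \set{\pm 1}$ before justifying it --- cleaner would be to let $\sigma \in \BZ$ be the constant value and then deduce $\sigma = (-1)^{rn}$ from the fixed-point identity, which simultaneously shows $\sigma$ is a sign so that \Cref{thm:main-theorem} applies.
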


\section{Conclusions}\label{conclusions:crit-xyz}
The falsity of the parity conjecture is unfortunately not enough to disprove the constancy of the Behrend function of $\Hilb^n(\BA^3)$. However, general enough counterexamples to the parity conjecture would constitute a good testing ground for disproving \Cref{conj:behrend}. Our main argument says precisely that points as general as in \Cref{eqn:nu5} would be enough to disprove it. 

\smallbreak
We believe, nevertheless, that having constant Behrend function equal to a sign should be thought of as a sort of `regularity property'. In fact, \Cref{conj:behrend} is a pretty subtle statement, precisely because the Hilbert scheme of points $\Hilb^n(\BA^3)$ does exhibit such regularity (or symmetry): it is a global \emph{critical locus}, i.e.~the zero scheme of an exact $1$-form $\dd f_n$ for $f_n$ a regular function on a smooth $\BC$-scheme $U_n$, see e.g.~\cite[Prop.~1.3.1]{MR2403807} or \cite[Thm.~2.6]{BR18}. We take \Cref{ex:3lines} as a toy situation to further explain this point. 
In that example, $Y = \Spec \BC[x,y,z] / (xy,xz,yz) \subset \BA^3$ is a critical locus, 
\[
Y = \crit(xyz),
\]
and as such it has a symmetric perfect obstruction theory in the sense of \cite{BFHilb}. Via the perfect obstruction theory machinery, one can compute directly
\[
\nu_Y(0) = (-1)^{\dim_{\BC}T_0Y-\dim_{\BC}T_0Y^{\BC^\times}} \cdot \nu_{Y^{\BC^\times}}(0) = (-1)^{3-0} \cdot 1 = -1,
\]
thanks to \cite[Thm.~C]{zbMATH06221964}, using that $Y$ has a $\BC^\times$-action and the symmetric perfect obstruction theory is $\BC^\times$-equivariant. This confirms once more that $\nu_Y \equiv -1$. Now, \Cref{ex:3lines} shows \emph{directly} how signed multiplicities in the normal cone for this critical locus balance each other and produce a constant Behrend function. It is possible that this behaviour does in fact occur for $\Hilb^n(\BA^3)$ as well, but checking this directly seems unfortunately out of reach at the moment.

\subsection*{Acknowledgements}
We thank Michele Graffeo for the numerous conversations around the parity conjecture, and for pointing out \Cref{ex:3lines}. We thank Martijn Kool and Sergej Monavari for all the instructive discussions around the Behrend function throughout the years.

\bibliographystyle{amsplain-nodash}
\bibliography{bib}

\bigskip
\bigskip
\bigskip
\noindent
Andrea T. Ricolfi \\
\address{SISSA, Via Bonomea 265, 34136, Trieste (Italy)} \\
\texttt{aricolfi@sissa.it}

\end{document}